\theoremstyle{plain}
\newtheorem{prop}{Proposition}[section]
\theoremstyle{definition}
\theoremstyle{plain}
\newtheorem{Metacommutation Action}{Theorem}[section]
\newtheorem{Sign of Permutation}{Theorem}[section]
\newtheorem{Fixed Points}[Sign of Permutation]{Theorem}
\newtheorem{Cycles}[Sign of Permutation]{Theorem}
\numberwithin{equation}{section}
\newtheorem{corollary}{Corollary}[section]
\theoremstyle{remark}
\theoremstyle{definition}
\newtheorem{Remark 1}{Remark}[section]
\newtheorem{Remark 2}[Remark 1]{Remark}
\def\th@plain{%
  \thm@notefont{}
  \itshape 
}
\def\th@definition{%
  \thm@notefont{}
  \normalfont 
}
\newcommand{\overbar}[1]{\mkern 1.5mu\overline{\mkern-1.5mu#1\mkern-1.5mu}\mkern 1.5mu}
\g@addto@macro\bfseries{\boldmath}
\title{Metacommutation as a Group Action on the Projective Line Over $\mathbb{F}_p$}
\author{Adam Forsyth}
\address{Georgetown Day School \\ 4200 Davenport Street NW, Washington, DC 20015}
\email{adambforsyth@gmail.com}
\author{Jacob Gurev}
\address{Mira Loma High School, 4000 Edison Avenue, Sacramento, CA 95821}
\email{jackgurev@gmail.com}
\author{Shakthi Shrima}
\address{Shrima Academy Homeschool, 17 Yellowtail Cove, Austin, TX 78745}
\email{s.s.shrima@gmail.com}
\date{}
\begin{document}
\maketitle

\begin{abstract}
Cohn and Kumar showed the quadratic character of $q$ modulo $p$ gives the sign of the permutation of Hurwitz primes of norm $p$ induced by the Hurwitz primes of norm $q$ under metacommutation. We demonstrate that these permutations are equivalent to those induced by the right standard action of $\operatorname{PGL}_2 (\mathbb{F}_p)$ on $\mathbb{P}^1 (\mathbb{F}_p)$. This equivalence provides simpler proofs of the results of Cohn and Kumar and characterizes the cycle structure of the aforementioned permutations. Our methods are general enough to extend to all orders over the quaternions with a division algorithm for primes of a given norm $p$. 
\end{abstract}

\section{Introduction}
\noindent Consider the Hamilton quaternions $\mathbb{H} = \mathbb{R} + \mathbb{R}i + \mathbb{R}j + \mathbb{R}k$, where $i, j,$ and $k$ are uniquely defined by the relation \[ i^2=j^2=k^2=ijk=-1. \]
In this paper, we study the subring of Hurwitz quaternions:
\[ \mathcal{H}= \mathbb{Z}\left[i, j, \frac{1}{2}(1+i+j+k)\right]. \]
(In words, this is the subring of quaternions whose components are either all elements of $\mathbb{Z}$ or all elements of $\mathbb{Z} + 1/2$.)

We begin by reviewing some fundamental definitions and results. For any quaternion
\[ h = a + bi + cj + dk, \]
we call
\begin{IEEEeqnarray*}{rCl}
h^\sigma &=& a - bi - cj - dk,\\
N(h) &=& a^2 + b^2 + c^2 + d^2, \\
\operatorname{tr}(h) &=& h + h^\sigma = 2a
\end{IEEEeqnarray*}
its \textit{conjugate}, \textit{norm}, and \textit{trace}, respectively.
The reader can easily verify that the norm is multiplicative. 

Because the ring of Hurwitz quaternions has left and right division algorithms, all of its ideals are principal. We call a Hurwitz integer $P$ \emph{prime} if it is irreducible. A Hurwitz integer is prime in $\mathcal{H}$ if and only if its norm is prime in $\mathbb{Z}$ (see \cite{csmith} for details.) In \cite{csmith}, Conway and Smith  prove any factorization of a  non-zero Hurwitz integer into Hurwitz primes is unique up to three phenomena, which we define below. (\cite{cperng} provides another exposition of this theorem.)

The Euclidean algorithm in $\mathcal{H}$ lets us factor any non-zero $h \in \mathcal{H}$ with norm $N(h) = p_1 p_2 \cdots p_n$, where the $p_i$ are prime, into a product of Hurwitz primes:
\[ h = P_1P_2 \cdots P_n, \quad \text{where } N(P_i) = p_i. \]
As in \cite{cohnkum}, we say this factorization of $h$ is \emph{modelled} on the factorization of $N(h)= p_1p_2 \cdots p_n$. When $N(h)$ is square-free, we call such a factorization unique up to \emph{unit migration}, because if the $p_i$ are distinct, then every factorization of $h$ modelled on 
\[ N(h) = \prod_{i=1}^n{p_i} \]
is of the form 
\begin{equation*}
h= (P_1u_1)(u^{-1}_1P_2u_2)\cdots(u^{-1}_{n-1}P_n), 
\end{equation*}
where the $u_i$ are units in $\mathcal{H}$. If a factorization of $h$ contains both a prime $P$ and $P^{\sigma}$ in sequence, we can replace $PP^{\sigma}$ by $P_1P_1^{\sigma}$ where $N(P)=N(P_1)$. We call such a factorization unique up to \emph{recombination}. As in \cite{cohnkum}, we say a left ideal $\mathcal{H}P$ for prime $P \in \mathcal{H}$ \emph{lies over} a rational prime $p$ if $N(P)=p$. 

The next proposition, which \cite{cperng} proves, establishes the main phenomenon we study in this paper.

\begin{prop}[Conway and Smith]
If $P$ and $Q$ are distinct primes in $\mathcal{H}$ that lie over rational primes $p$ and $q$ repectively, then $PQ$ has a factorization $Q'P'$ modelled on $qp$ that is unique up to unit migration.
\end{prop}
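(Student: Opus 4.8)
The plan is to isolate $Q'$ as a distinguished left divisor of $PQ$ of norm $q$, take $P'$ to be the cofactor, and then derive unit migration from the fact that two generators of the same principal one-sided ideal of the domain $\mathcal{H}$ differ by a unit. Throughout I work in the case $p \neq q$ relevant to metacommutation; when $p = q$ the norm $N(PQ) = p^2$ is not squarefree and the cofactorization is only unique up to unit migration \emph{and} recombination, so I set that degenerate case aside.

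\textbf{Existence.} Consider the right ideal $I = PQ\mathcal{H} + q\mathcal{H}$, and compute its index by reducing modulo $q$. For $q$ odd, $\mathcal{H}/q\mathcal{H} \cong M_2(\mathbb{F}_q)$; the image $\overbar{P}$ is invertible, since its determinant equals $N(P) \bmod q = p \neq 0$; and the image $\overbar{Q}$ has rank exactly $1$, because its determinant is $N(Q) \bmod q = 0$ while $\overbar{Q} \neq 0$ (if $Q \in q\mathcal{H}$ then $N(Q) \geq q^2 > q$). Hence $\overbar{PQ} = \overbar{P}\,\overbar{Q}$ has rank $1$, so the right ideal it generates in $M_2(\mathbb{F}_q)$ has $q^2$ elements, giving $[\mathcal{H} : I] = q^2$ (the ramified prime $q = 2$ is handled by the same count in the local ring $\mathcal{H}/2\mathcal{H}$). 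Since $\mathcal{H}$ has a right division algorithm, $I = Q'\mathcal{H}$ is principal, and from the norm--index relation $[\mathcal{H} : \alpha\mathcal{H}] = N(\alpha)^2$ we get $N(Q') = q$; as $q$ is prime, $Q'$ is a Hurwitz prime. Now $PQ \in I = Q'\mathcal{H}$, so $PQ = Q'P'$; taking norms gives $N(P') = p$, so $P'$ is a Hurwitz prime and $Q'P'$ is a factorization modelled on $qp$.

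\textbf{Uniqueness up to unit migration.} Suppose $PQ = Q'P' = Q''P''$ are two factorizations modelled on $qp$. From $N(Q') = q$ we have $q = Q'(Q')^\sigma \in Q'\mathcal{H}$, and $PQ = Q'P' \in Q'\mathcal{H}$, so $I = PQ\mathcal{H} + q\mathcal{H} \subseteq Q'\mathcal{H}$; both sides have index $q^2$ in $\mathcal{H}$, so $Q'\mathcal{H} = I$, and likewise $Q''\mathcal{H} = I$. Thus $Q'\mathcal{H} = Q''\mathcal{H}$, so $Q'' = Q'u$ for some $u \in \mathcal{H}$ that has a two-sided inverse in $\mathcal{H}$, hence is a unit. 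Substituting into $Q'P' = Q''P'' = Q'uP''$ and cancelling $Q'$ on the left (valid since $\mathcal{H}$ is a domain) yields $P'' = u^{-1}P'$. So $(Q'', P'') = (Q'u,\, u^{-1}P')$, which is exactly unit migration.

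\textbf{Main obstacle.} The only nonformal step is the index computation $[\mathcal{H} : PQ\mathcal{H} + q\mathcal{H}] = q^2$, that is, pinning down the rank of $PQ$ modulo $q$; this is precisely where the hypothesis $p \neq q$ is used (it makes $\overbar{P}$ invertible), and it is also the step that must be rechecked at the ramified prime $q = 2$ and adapted to other quaternion orders for the generalization promised in the introduction.
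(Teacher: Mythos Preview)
The paper does not actually prove this proposition: it is stated with the attribution ``Conway and Smith'' and introduced by the sentence ``The next proposition, which \cite{cperng} proves, establishes the main phenomenon we study in this paper.'' There is therefore no in-paper argument to compare against.

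Your proof is correct. The key idea---identifying $Q'$ as a generator of the right ideal $PQ\mathcal{H} + q\mathcal{H}$, computing its index via the rank of $\overbar{PQ}$ in $\mathcal{H}/q\mathcal{H}\cong M_2(\mathbb{F}_q)$, and then invoking principality of one-sided ideals---is exactly the standard route and matches the spirit of the machinery the paper sets up in Sections~2--3 (the isomorphism $\overbar{\mathcal{H}}\cong M_2(\mathbb{F}_p)$ and the norm--determinant correspondence). Your uniqueness argument, showing any left divisor of $PQ$ of norm $q$ generates the same right ideal $I$, is clean and complete.

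Two small remarks. First, your restriction to $p\neq q$ is appropriate: the proposition as stated is slightly imprecise, since ``distinct primes'' in $\mathcal{H}$ does not force $p\neq q$, and for $p=q$ recombination genuinely enters. Second, the parenthetical about $q=2$ is the one place where you wave your hands; since the paper itself only ever works with odd $p$ (see the standing hypothesis in Section~5) and the isomorphism of Proposition~3.1 fails at the ramified prime, it would be cleanest either to assume $q$ odd or to spell out the structure of $\mathcal{H}/2\mathcal{H}$ explicitly.
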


We call the process of ``swapping'' adjacent primes \emph{metacommutation}. Given $Q \in \mathcal{H}$ with norm coprime to $p$, every $P$ with norm $p$ has an associated prime $P'$ also of norm $p$, which satisfies
\[ PQ = Q'P' \quad \text{for some } Q' \in \mathcal{H}. \]
The \textit{metacommutation map} on primes of norm $p$ sends every $P$ to $P'$. For each $Q$, this map induces a permutation of the Hurwitz primes of norm $p$. In \cite{cohnkum}, Cohn and Kumar prove that the sign of this permutation is $\genfrac(){}{}{q}{p}$, where $\genfrac(){}{}{\cdot}{p}$ is the Legendre symbol, and that this permutation has
\[ 1 + \left(\frac{\operatorname{tr}(Q)^2 - 4q}{p}\right) \]
fixed points. Although the current literature gives several properties of the permutation the metacommutation map induces, it does not directly describe this permutation. 

In Sections 2 and 3 of this paper, we establish isomorphisms and bijections which simplify studying the metacommutation phenomenon; in Section 4, we show the metacommutation mapping is isomorphic to the right standard action of $\operatorname{PGL}_2(\mathbb{F}_p)$ on the projective line over $\mathbb{F}_p$; in Section 5, we provide new, shorter proofs of the results of Cohn and Kumar, and characterize much of the cycle strture of the permutations. 

\section{Definitions and Preliminaries}
\noindent Let $\mathcal{H}/\mathcal{H}p= \overline{\mathcal{H}}$, where $p$ is a fixed rational prime, and let $\overline P$ denote the reduction of any $P \in \mathcal{H}$ modulo $p$. Given a Hurwitz prime $P$ of norm $p$, we can then define the left ideal $\overline{\mathcal{H} P}= \left\{h\overline{P} \mid h \in \overline{\mathcal{H}}\right \}$, which is the modulo $p$ reduction of the ideal $\mathcal{H}P$. Note that two primes $P$ and $P'$ with norm p induce the same ideal in $\overline{\mathcal{H}}$ when they are left associates; this motivates us to consider uniqueness of the primes with norm $p$ only up to left multiplication by units.

All such ideals have dimension $2$. Indeed, since conjugation is an automorphism of $\mathcal{H}$, $\overline{\mathcal{H}P}$ has the same dimension as $\overline{\mathcal{H}P^\sigma}$. Because the elements $h \in \overline{\mathcal{H}P^\sigma}$ are exactly those where $hP$ equals $0$ inside of $\overline{\mathcal{H}}$, 
\[ \operatorname{dim}(\overline{\mathcal{H}P})+\operatorname{dim}(\overline{\mathcal{H}P^\sigma})=\operatorname{dim}(\overline{\mathcal{H}})=4, \]
and $\operatorname{dim}(\overline{\mathcal{H}P})=2$ as desired.

\cite{cperng} gives a one-to-one correspondence between primes in $\mathcal{H}$ and the two-dimensional left ideals of $\overline{\mathcal{H}}$. This lets us count the number of Hurwitz primes of norm $p$ through a bijective correspondence with points on the conic \begin{gather*}
C_p = \left\{(x, y, z) \in \mathbb{P}^2(\mathbb{F}_p) \mid x^2 +y^2 +z^2 =0 \right\}.
\end{gather*} 
The following proposition establishes this correspondence. (The proof we give follows the argument in \cite{cohnkum}.)
\begin{prop} There is a bijective correspondence between points on the conic $C_p$ and Hurwitz primes $P$ of norm $p$. 
\end{prop}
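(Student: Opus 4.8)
The plan is to construct the bijection by routing through the two–dimensional left ideals of $\overline{\mathcal{H}}$, using the correspondence between such ideals and the Hurwitz primes of norm $p$ (considered, as above, only up to left multiplication by units) recalled from \cite{cperng}. Throughout I take $p$ odd, so that $\overline{\mathcal{H}}$ has $\mathbb{F}_p$-basis $1,i,j,k$ with the usual quaternion relations, the norm form on $\overline{\mathcal{H}}$ is $N(\overline{a+bi+cj+dk}) = a^2+b^2+c^2+d^2$, and moreover $\overline{\mathcal{H}} \cong M_2(\mathbb{F}_p)$ with $\sigma$ corresponding to the adjugate $M \mapsto \operatorname{tr}(M)I - M$ and $N$ to the determinant. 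Write $V = \mathbb{F}_p i + \mathbb{F}_p j + \mathbb{F}_p k$ for the subspace of trace-zero (equivalently, real-part-zero) elements of $\overline{\mathcal{H}}$. To a prime $P$ of norm $p$ I attach the subspace $\overline{\mathcal{H}P} \cap V$.

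First I would check this intersection is one-dimensional. Since $\overline{\mathcal{H}P}$ is two-dimensional (by the argument already given) and $V$ is three-dimensional, $\dim(\overline{\mathcal{H}P}\cap V) \geq 1$; it cannot be $2$, because no nonzero left ideal of $\overline{\mathcal{H}}$ lies inside $V$: if $0 \neq \overline{w} = \overline{bi+cj+dk} \in V$, a direct computation with the quaternion relations shows the real parts of $\overline{iw},\overline{jw},\overline{kw}$ are $-b,-c,-d$, so at least one of them lies outside $V$ and hence $\overline{\mathcal{H}}\,\overline{w} \not\subseteq V$. Thus $\overline{\mathcal{H}P}\cap V = \mathbb{F}_p\,\overline{v}$ for a single nonzero $\overline{v} = \overline{xi+yj+zk}$. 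Writing $\overline{v} = \overline{hP}$ with $h \in \mathcal{H}$, we have that $x^2+y^2+z^2 = N(\overline{v})$ is the reduction modulo $p$ of $N(hP) = N(h)N(P)$, which vanishes since $N(P) = p$; hence $[x:y:z] \in C_p$. Because $\overline{\mathcal{H}(uP)} = \overline{\mathcal{H}P}$ for every unit $u$, the point $[x:y:z]$ depends only on the left-association class of $P$, and $P \mapsto [x:y:z]$ is the claimed map.

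For bijectivity I would use the companion fact that whenever $\overline{v} \in \overline{\mathcal{H}}$ is nonzero with $N(\overline{v}) = 0$, the left ideal $\overline{\mathcal{H}}\,\overline{v}$ is exactly two-dimensional (in the matrix model $\overline{v}$ is a rank-one matrix and $\overline{\mathcal{H}}\,\overline{v}$ is the set of matrices with its row space; alternatively, one reruns the dimension count of the preliminary discussion with $\overline{v}$ in place of $\overline{P}$). Injectivity: if $P$ and $P'$ give the same point, the common spanning element $\overline{v}$ lies in both $\overline{\mathcal{H}P}$ and $\overline{\mathcal{H}P'}$, so the two-dimensional ideal $\overline{\mathcal{H}}\,\overline{v}$ is contained in each of the two-dimensional ideals $\overline{\mathcal{H}P}$, $\overline{\mathcal{H}P'}$; hence $\overline{\mathcal{H}P} = \overline{\mathcal{H}}\,\overline{v} = \overline{\mathcal{H}P'}$, and $P,P'$ are left associates by \cite{cperng}. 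Surjectivity: given $[x:y:z] \in C_p$, the element $\overline{v} = \overline{xi+yj+zk}$ is nonzero with $N(\overline{v}) = x^2+y^2+z^2 = 0$, so $\overline{\mathcal{H}}\,\overline{v}$ is a two-dimensional left ideal, equal to $\overline{\mathcal{H}P}$ for some prime $P$ of norm $p$; since $\overline{v} \in \overline{\mathcal{H}}\,\overline{v}\cap V = \overline{\mathcal{H}P}\cap V$, this $P$ maps back to $[x:y:z]$.

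The only real work lies in the two linear-algebra statements about left ideals of $\overline{\mathcal{H}}$ — that $\overline{\mathcal{H}P}\cap V$ is exactly a line, and that $\overline{\mathcal{H}}\,\overline{v}$ is exactly a plane when $\overline{v}\neq 0$ has norm zero — and both become transparent once $\overline{\mathcal{H}}$ is identified with $M_2(\mathbb{F}_p)$ (legitimate because $p$ is odd), where the nonzero norm-zero elements are precisely the rank-one matrices and the left ideals are classified by their row spaces; for a self-contained argument one reproves them by the same conjugation-and-rank-nullity reasoning used above to compute $\dim\overline{\mathcal{H}P}$. The hypothesis that $p$ is odd is genuinely needed here — for instance $C_2$ has three points while there is a single Hurwitz prime of norm $2$ up to units — which is consistent with the role played by the Legendre symbol throughout the paper.
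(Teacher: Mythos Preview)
Your proof is correct and follows essentially the same approach as the paper: both isolate the unique trace-zero line inside the two-dimensional ideal $\overline{\mathcal{H}P}$ (the paper phrases this as the kernel of the trace functional, you as the intersection with $V$, and both rule out dimension $2$ via left multiplication by $i,j,k$), then land on $C_p$ because everything in $\overline{\mathcal{H}P}$ has norm zero. The only difference is in the bijectivity step: the paper argues in one line that $\overline{\mathcal{H}}\,t_P = \overline{\mathcal{H}P}$ (so the point recovers the ideal and hence $P$ up to left units), whereas you unpack this into separate injectivity and surjectivity arguments---and you also make explicit the necessary hypothesis $p\neq 2$, which the paper leaves implicit here.
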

\begin{proof}
The trace function on the ideal $\overline{\mathcal{H}P}$ is linear. Being a linear function from a two-dimensional vector space over $\mathbb{F}_p$ to a one-dimensional vector space, it therefore has a nontrivial kernel. It is easy to see that not every element of $\overline{\mathcal{H}P}$ can have trace $0$, as $\overline{\mathcal{H}P}$ is closed under left multiplication by $i$, $j$, and $k$. The kernel thus has dimension exactly one. It follows that up to scaling, there is a unique nonzero element 
\[ t_P=ai+bj+ck \in \overline{\mathcal{H}P} \]
with trace $0$. But, all elements of $\overline{\mathcal{H}P}$ have norm $0$, so $N(t_P) = 0$, implying
\[ a^2+b^2+c^2=0. \]
Therefore, there is a corresponding point $c_P = (a,b,c) \in C_p$. 

Because $t_P \neq 0$, $\overline{\mathcal{H}}t_P=\overline{\mathcal{H}P}$. Therefore, $t_P$ is a left associate of $P$, and the map $P \mapsto c_P$ is a bijection.
\end{proof}

This bijection lets us reduce the study of metacommutation to an action on $C_p$. 

The following proposition describes the conjugation action on the conic.

\begin{prop}
For a prime $P \in \mathcal{H}$ with norm $p$, let $Q$ be a Hurwitz integer with norm coprime to $p$. If $PQ=Q'P'$, then $\overline{Q^{-1}}t_P\overline{Q}=t_{P'}.$ 
\end{prop}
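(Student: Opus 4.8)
The plan is to pin down $t_{P'}$ using the proof of the preceding proposition: up to an $\mathbb{F}_p^{\times}$-scalar, $t_{P'}$ is the unique nonzero element of the two-dimensional left ideal $\overline{\mathcal{H}P'}$ having trace $0$. So it suffices to check that $x := \overline{Q^{-1}}\,t_P\,\overline{Q}$ is (i) nonzero, (ii) an element of $\overline{\mathcal{H}P'}$, and (iii) of trace $0$; the uniqueness statement then forces $x = \lambda\,t_{P'}$ for some $\lambda \in \mathbb{F}_p^{\times}$, which is precisely the assertion once one recalls that $t_{P'}$ is itself only defined up to scaling (equivalently, that the statement records the projective point $c_{P'} = [\,\overline{Q^{-1}}\,t_P\,\overline{Q}\,]$ on $C_p$). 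I would make this normalization convention explicit just before the proof, and remark that the content lies at odd $p$, where $\overline{\mathcal{H}} \cong M_2(\mathbb{F}_p)$; for $p = 2$ there is a single Hurwitz prime of norm $2$ up to left units and the claim is vacuous.

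For (i) and (ii): since $N(Q)$ is coprime to $p$, the identities $Q Q^{\sigma} = Q^{\sigma} Q = N(Q)$ show that $\overline{Q}$ is a unit of $\overline{\mathcal{H}}$, with inverse $\overline{Q^{-1}} = \overline{N(Q)}^{-1}\,\overline{Q^{\sigma}}$; in particular $x \neq 0$. For membership I would invoke the preceding proposition to write $t_P = r\,\overline{P}$ with $r \in \overline{\mathcal{H}}$, using $t_P \in \overline{\mathcal{H}P} = \overline{\mathcal{H}}\,\overline{P}$. Reducing the metacommutation relation $PQ = Q'P'$ modulo $p$ gives $\overline{P}\,\overline{Q} = \overline{Q'}\,\overline{P'}$, and therefore
\[ x \;=\; \overline{Q^{-1}}\,r\,\overline{P}\,\overline{Q} \;=\; \big(\overline{Q^{-1}}\,r\,\overline{Q'}\big)\,\overline{P'} \;\in\; \overline{\mathcal{H}}\,\overline{P'} \;=\; \overline{\mathcal{H}P'}. \]
This step uses only that $\overline{\mathcal{H}P'}$ is a left ideal, and does not even require $\overline{Q'}$ to be a unit.

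For (iii) I would use the cyclicity $\operatorname{tr}(ab) = \operatorname{tr}(ba)$ for all $a, b \in \overline{\mathcal{H}}$ — immediate from expanding $\operatorname{tr}(ab) = ab + (ab)^{\sigma}$ into scalar and vector parts, which is symmetric in $a$ and $b$ — so that
\[ \operatorname{tr}(x) \;=\; \operatorname{tr}\big(\overline{Q^{-1}}\,t_P\,\overline{Q}\big) \;=\; \operatorname{tr}\big(\overline{Q}\,\overline{Q^{-1}}\,t_P\big) \;=\; \operatorname{tr}(t_P) \;=\; 0. \]
I do not anticipate a serious obstacle: the argument is short. The only point that genuinely requires care is the bookkeeping about what the equality means, since $t_P$ and $t_{P'}$ both carry an unavoidable $\mathbb{F}_p^{\times}$-scaling ambiguity; accordingly I would present the statement as an equality of representatives (equivalently, of points of $\mathbb{P}^2(\mathbb{F}_p)$) rather than claim a canonical equality of elements of $\overline{\mathcal{H}}$, and flag this before the proof.
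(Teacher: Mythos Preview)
Your proposal is correct and follows essentially the same route as the paper: verify that $\overline{Q^{-1}}t_P\overline{Q}$ is nonzero, lies in $\overline{\mathcal{H}P'}$, and has trace zero, then invoke the uniqueness of $t_{P'}$ from the preceding proposition. The only cosmetic differences are that the paper establishes membership via the chain of ideal equalities $\overline{Q^{-1}\mathcal{H}PQ}=\overline{\mathcal{H}PQ}=\overline{\mathcal{H}Q'P'}=\overline{\mathcal{H}P'}$ rather than your elementwise computation, and the paper leaves the trace-zero claim and the scaling convention implicit where you spell them out.
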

\begin{proof}
Because $Q$ and $Q'$ are invertible in $\overline{\mathcal{H}}$, \[\overline{Q^{-1}\mathcal{H}PQ}=\overline{\mathcal{H}PQ}=\overline{\mathcal{H}Q'P'}=\overline{\mathcal{H}P'}.\]
As $t_P \in \overline{\mathcal{H}P}$, we can deduce that 
\[ \overline{Q^{-1}}t_P\overline{Q} \in \overline{\mathcal{H}P'}. \] $\overline{Q^{-1}}t_P\overline{Q}$ is nonzero and has trace $0$, so $\overline{Q^{-1}}t_P\overline{Q} = t_{P'}$. 
\end{proof}

Proposition 2.2 gives a simpler way of thinking about the metacommutation action: as a group action by conjugation of the quaternions with non-zero norm in $\overline{\mathcal{H}}$ on the points on our projective conic.

\section{An Isomorphism}
\noindent While the characterization of the metacommutation mapping as a group action on $C_p$, as \cite{cohnkum} gives, is simple, $\overline{\mathcal{H}}$ is still not as intuitive as we want. So in the spirit of simplifying our study of metacommutation, we recall the following proposition. 

\begin{prop}
$\overbar{\mathcal{H}}$ is isomorphic to $\mathcal{M}_2(\mathbb{F}_p)$. 
\end{prop}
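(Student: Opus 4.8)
The plan is to exhibit an explicit $\mathbb{F}_p$-algebra isomorphism. First I would note that $\overline{\mathcal{H}} = \mathcal{H}/\mathcal{H}p$ is a $4$-dimensional $\mathbb{F}_p$-algebra, since $\mathcal{H}$ is a free $\mathbb{Z}$-module of rank $4$ and reduction mod $p$ preserves this. The target $\mathcal{M}_2(\mathbb{F}_p)$ is also $4$-dimensional, so it suffices to produce a surjective (equivalently injective) $\mathbb{F}_p$-algebra homomorphism. The natural approach is to find matrices $I, J \in \mathcal{M}_2(\mathbb{F}_p)$ satisfying $I^2 = J^2 = -1$ and $IJ = -JI$, which then forces the assignment $1 \mapsto \operatorname{Id}$, $i \mapsto I$, $j \mapsto J$, $k \mapsto IJ$ to extend to an algebra map from $\mathbb{Z}[i,j]/p$, and I would check it descends to all of $\overline{\mathcal{H}}$ (the generator $\tfrac12(1+i+j+k)$ maps to a well-defined element since $2$ is invertible mod $p$, at least when $p$ is odd).

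The construction of $I$ and $J$ splits by cases on $p$. For $p$ odd, I would use that $-1$ is a sum of two squares in $\mathbb{F}_p$: choose $a, b \in \mathbb{F}_p$ with $a^2 + b^2 = -1$ (possible by a counting/pigeonhole argument on the sets $\{a^2\}$ and $\{-1-b^2\}$), and set
\[
I = \begin{pmatrix} a & b \\ b & -a \end{pmatrix}, \qquad J = \begin{pmatrix} 0 & -1 \\ 1 & 0 \end{pmatrix},
\]
then verify $I^2 = -\operatorname{Id}$, $J^2 = -\operatorname{Id}$, and $IJ = -JI$ by direct computation. For $p = 2$ the ring $\overline{\mathcal{H}}$ must be handled separately (indeed $\mathcal{H}/2\mathcal{H}$ is not even $4$-dimensional over $\mathbb{F}_2$ in the naive sense because of the half-integer generator), but since the metacommutation results concern odd primes $p$, I would either restrict to odd $p$ or treat $p=2$ as a short remark. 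Once $I, J$ are in hand, injectivity of the resulting map follows because $\mathcal{M}_2(\mathbb{F}_p)$ is simple, so a nonzero algebra homomorphism into it from any ring is automatically injective when domains have equal dimension — alternatively, I would exhibit the inverse directly on a basis.

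The main obstacle I anticipate is not the algebra but the bookkeeping around the maximal-order generator $\tfrac12(1+i+j+k)$: one must confirm that the map defined on $i, j, k$ genuinely extends to $\mathcal{H}$ (not just to the Lipschitz order $\mathbb{Z}[i,j]$) and that reduction mod $p$ kills exactly $p\mathcal{H}$, so that the induced map on $\overline{\mathcal{H}}$ is well-defined and dimension-counting applies. Since for odd $p$ we have $\tfrac12 \equiv \tfrac{p+1}{2} \pmod p$, the element $\tfrac12(1+i+j+k)$ already lies in $\mathbb{Z}_{(p)}[i,j]$, so $\overline{\mathcal{H}} = \overline{\mathbb{Z}[i,j]} = \mathbb{F}_p\langle i, j\rangle / (i^2+1, j^2+1, ij+ji)$, and the whole question reduces to the quaternion-algebra splitting $\left(\tfrac{-1,-1}{\mathbb{F}_p}\right) \cong \mathcal{M}_2(\mathbb{F}_p)$, which is classical; citing this or giving the two-line matrix verification above completes the proof.
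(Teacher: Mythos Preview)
Your proposal is correct and takes essentially the same route as the paper: both choose $a,b\in\mathbb{F}_p$ with $a^2+b^2=-1$ via pigeonhole and send $i,j$ to explicit $2\times 2$ matrices satisfying the quaternion relations, then conclude by a dimension count. The only differences are cosmetic sign choices in the matrices and that you are more careful than the paper about why the map defined on the Lipschitz generators extends to the Hurwitz order $\mathcal{H}$ (the paper simply writes down $\varphi$ on $1,i,j,k$ and does not comment on $\tfrac{1}{2}(1+i+j+k)$).
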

\begin{proof}
The isomorphism follows from the fact that $\overbar{\mathcal{H}}$ is a split four-dimensional algebra over $\mathbb{F}_p$, for the sake of clarity we will construct an explicit isomorphism $\varphi:\overbar{\mathcal{H}}\rightarrow\mathcal{M}_2(\mathbb{F}_p)$. If $\gamma \in \overbar{\mathcal{H}}$ and \[ \gamma= \gamma_{1} + \gamma_{2}i + \gamma_{3}j + \gamma_{4}k, \] then
\begin{gather*}
\varphi (\gamma) =  
\begin{pmatrix*}[r]
\gamma_{1} +\gamma_{2}a+\gamma_{4}b & \gamma_{3} + \gamma_{4}a-\gamma_{2}b \\
-\gamma_{3} + \gamma_{4}a-\gamma_{2}b & \gamma_{1}-\gamma_{2}a-\gamma_{4}b 
 \end{pmatrix*}, 
 \end{gather*}
where $a^2+b^2 \equiv -1 \bmod{p}$. (A pigeonhole argument proves such $a$ and $b$  always exist; see \cite{irelandrosen}). Note that under $\varphi$, we have
\[ 1 \mapsto 
\begin{pmatrix}
1 &  0 \\
0 &  1 \end{pmatrix}, \
i  \mapsto \begin{pmatrix*}[r]
a & -b \\
-b  &  -a \end{pmatrix*}, \]
\[ j \mapsto  \begin{pmatrix*}[r]
0 &  1 \\
-1  & 0 \end{pmatrix*}, \
k  \mapsto  \begin{pmatrix*}[r]
b & a \\
a & -b \end{pmatrix*}. \]
Explicit calculation then shows
\[ \varphi(\gamma\delta) = \varphi(\gamma)\varphi(\delta) \quad \text{and} \quad  \varphi(\gamma + \delta) = \varphi(\gamma) + \varphi (\delta). \] (Also, note that 
\[ \varphi(i)^2=\varphi(j)^2=\varphi(k)^2=\varphi(i)\varphi(j)\varphi(k)=\varphi(-1), \]
as desired.) Thus, $\varphi$ is a ring homomorphism, and, since it is bijective, an isomorphism. 
\end{proof}

\begin{corollary} 
$N(\gamma)= \operatorname{det}(\varphi(\gamma))$ and $\operatorname{tr}(\gamma) = \operatorname{tr}(\varphi(\gamma))$, where $\operatorname{tr}(\varphi(\gamma))$ is the standard matrix trace. 
\end{corollary}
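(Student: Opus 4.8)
The plan is to verify both equalities by direct substitution into the explicit formula for $\varphi$ recorded in the proof of Proposition 3.1. Write $\gamma = \gamma_1 + \gamma_2 i + \gamma_3 j + \gamma_4 k$, so that $\varphi(\gamma)$ has diagonal entries $\gamma_1 + \gamma_2 a + \gamma_4 b$ and $\gamma_1 - \gamma_2 a - \gamma_4 b$. Adding these gives $\operatorname{tr}(\varphi(\gamma)) = 2\gamma_1 = \operatorname{tr}(\gamma)$, so the trace identity is immediate and requires nothing beyond reading off the matrix.

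For the norm, I would expand $\operatorname{det}(\varphi(\gamma))$ as the product of the diagonal entries minus the product of the off-diagonal entries. Each of these two products has the shape $(u+v)(u-v)$: the diagonal product with $u = \gamma_1$ and $v = \gamma_2 a + \gamma_4 b$, and the off-diagonal product $(\gamma_3 + \gamma_4 a - \gamma_2 b)(-\gamma_3 + \gamma_4 a - \gamma_2 b)$ with $u = \gamma_4 a - \gamma_2 b$ and $v = \gamma_3$. Hence each collapses to $u^2 - v^2$, the cross terms $-2\gamma_2\gamma_4 ab$ that both produce cancel upon subtraction, and one is left with
\[ \operatorname{det}(\varphi(\gamma)) = \gamma_1^2 + \gamma_3^2 - (a^2 + b^2)(\gamma_2^2 + \gamma_4^2). \]
Invoking $a^2 + b^2 \equiv -1 \pmod{p}$, which is exactly the defining property of $a$ and $b$, turns the right-hand side into $\gamma_1^2 + \gamma_2^2 + \gamma_3^2 + \gamma_4^2 = N(\gamma)$.

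A more conceptual alternative, which sidesteps the bookkeeping, is to use that every $\gamma \in \overline{\mathcal{H}}$ satisfies the quadratic relation $\gamma^2 - \operatorname{tr}(\gamma)\gamma + N(\gamma)\cdot 1 = 0$ (a consequence of $\gamma\gamma^\sigma = \gamma^\sigma\gamma = N(\gamma)$ together with the centrality of $\operatorname{tr}(\gamma)$). Applying the isomorphism $\varphi$ and comparing with the Cayley--Hamilton identity for $\varphi(\gamma)$ yields $\bigl(\operatorname{tr}(\varphi(\gamma)) - \operatorname{tr}(\gamma)\bigr)\varphi(\gamma) = \bigl(\operatorname{det}(\varphi(\gamma)) - N(\gamma)\bigr)I$; when $\varphi(\gamma)$ is not a scalar matrix it is linearly independent from $I$, so both coefficients must vanish, and the scalar case (where $\gamma$ is a rational scalar) is trivial. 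Either way I do not expect a genuine obstacle: the only points needing a moment's care are that the $ab$ cross terms in the determinant cancel precisely, and that the substitution $a^2 + b^2 \equiv -1$ is what converts the apparently indefinite expression back into the norm form.
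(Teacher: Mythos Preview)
Your proposal is correct. The paper states this corollary without proof, treating it as an immediate consequence of the explicit formula for $\varphi$ in Proposition~3.1; your direct computation (and the Cayley--Hamilton alternative) fills in exactly the verification the paper leaves to the reader.
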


\section{An Action on the Projective Line Over $\mathbb{F}_p$}
\noindent Now we characterize the metacommutation action as a group action.

\begin{Metacommutation Action}
The metacommutation action on $C_p$ is isomorphic to the right standard action of $\operatorname{PGL}_2(\mathbb{F}_p)$ on $\mathbb{P}^{1}(\mathbb{F}_p) $, which we use in the form 
\[ \langle x, y \rangle * \begin{pmatrix} a_1 & a_2 \\ a_3 & a_4 \end{pmatrix} = \langle a_1 x + a_3 y, a_2 x + a_4 y \rangle. \]
\end{Metacommutation Action}
\begin{proof}
Let $\mathcal{U}$ denote the elements of $\overbar{\mathcal{H}}$ with non-zero norm. Under the isomorphism $\varphi$, $\mathcal{U} \mapsto \operatorname{GL}_2(\mathbb{F}_p)$. When we projectivize $\operatorname{GL}_2(\mathbb{F}_p)$ to obtain $\operatorname{PGL}_2(\mathbb{F}_p)$, we have an isomorphism 
\begin{align*}
\Theta: \mathcal{U}/\mathbb{F}_p^* \rightarrow \operatorname{PGL}_2(\mathbb{F}_p),
\end{align*}
since $\operatorname{PGL}_2(\mathbb{F}_p) =\operatorname{GL}_2(\mathbb{F}_p)/\operatorname{Z}(\mathbb{F}_p)$. 

Recall that elements of the conic $C_p$ are, by definition, those elements with norm and trace equal zero. Hence, $C_p$ maps to $D$ under $\Theta$, where $D$ denotes the elements in $\mathcal{M}_2(\mathbb{F}_p)$ of the form 
\[ \begin{pmatrix*}[r] -a_1 & a_2 \\ 
a_3 & a_1\end{pmatrix*}, \quad \text{where } a_1^{2}=a_2 a_3. \] 
We can characterize the $p+1$ elements of $D$ as follows.
If $a_3=0$ then $a_1=0$, and therefore only one element of $C_p$ maps to such a matrix: $\begin{psmallmatrix}
0 & a_2 \\
0 & 0
\end{psmallmatrix}$, which is equivalent to $\begin{psmallmatrix}
0 & 1 \\
0 & 0
\end{psmallmatrix}$ in $\operatorname{PGL}_2(\mathbb{F}_p)$. We associate this matrix with the member $\langle 0,1 \rangle$ of the projective line.
Otherwise, by scaling, we can assume $a_3 = 1$; under $\Theta$, the $p$ remaining points of $C_p$ then map to the elements of $D$ of the form
\[ \begin{pmatrix*}[r]
-a_1 & -a_1^2 \\
 1 & a_1
\end{pmatrix*}. \]
We associate these matrices with the elements $\langle 1,a_1 \rangle$ of the projective line. Thus the action of the projectivization of $\overbar{\mathcal{H}}$ on $C_p$ is isomorphic to the conjugation action of $\operatorname{PGL}_2(\mathbb{F}_p)$ on $D$.

Let 
\[ M_0 = \begin{pmatrix} 0 & 1 \\ 0 & 0 \end{pmatrix} \in D \quad \text{and} \quad A = \begin{pmatrix} a_1 & a_2 \\ a_3 & a_4 \end{pmatrix} \in \operatorname{PGL}_2 (\mathbb{F}_p). \]
Then,
\begin{IEEEeqnarray*}{rCl}
A^{-1} M_0 A &=& 
\begin{pmatrix}
a_1 & a_2 \\ 
a_3 & a_4
\end{pmatrix}^{-1}
\begin{pmatrix}
 0 & 1 \\
 0 & 0
\end{pmatrix}
\begin{pmatrix}
a_1 & a_2 \\ 
a_3 & a_4
\end{pmatrix} \\
&=&
\begin{pmatrix*}
a_4 a_3 & {a_4}^{2} \\
-{a_3}^2 & -a_4 a_3 
\end{pmatrix*}. \IEEEyesnumber 
\end{IEEEeqnarray*}
It is easy to see that this matrix corresponds to the element $\langle a_3,a_4 \rangle$ of the projective line.
This agrees with the standard action of $\operatorname{PGL}_2(\mathbb{F}_p)$, as

\begin{IEEEeqnarray*}{rCl}
\langle 0,1 \rangle 
\begin{pmatrix}
a_1 & a_2 \\
a_3 & a_4
\end{pmatrix}
&=& \langle a_3,a_4 \rangle
\end{IEEEeqnarray*}

Therefore the action by conjugation of $\operatorname{PGL}_2 (\mathbb{F}_p)$ on the element of $D$ of the form $\begin{psmallmatrix}
 0 & 1 \\
 0 & 0
\end{psmallmatrix}$ is isomorphic to the right standard action of $\operatorname{PGL}_2 (\mathbb{F}_p)$ on $\mathbb{P}^1 (\mathbb{F}_p)$.

Next, we prove the same result for elements $M \in D$ of the form $\begin{psmallmatrix}
-m & -m^2 \\
1 & m
\end{psmallmatrix}$. If $A \in \operatorname{PGL}_2 ( \mathbb{F}_p )$, then
\begin{IEEEeqnarray*}{rCl}
A^{-1}MA &=& \begin{pmatrix*}[l]
a_1 & a_2 \\ 
a_3 & a_4
\end{pmatrix*}^{-1}
\begin{pmatrix}
-m & -m^2 \\
1 & m
\end{pmatrix}
\begin{pmatrix*}[r]
a_1 & a_2 \\ 
a_3 & a_4
\end{pmatrix*}
\\
&=& \begin{pmatrix*}[r]
a_4 & -a_2 \\ 
-a_3 & a_1
\end{pmatrix*}
\begin{pmatrix}
-m & -m^2 \\
1 & m
\end{pmatrix}
\begin{pmatrix*}[r]
a_1 & a_2 \\ 
a_3 & a_4
\end{pmatrix*} 
\\
&=&
\begin{pmatrix}
-(a_1 + a_3 m) (a_2 + a_4 m) & -(a_2 + a_4 m)^2 \\
(a_1 + a_3 m)^2 & (a_1 + a_3 m) (a_2 + a_4 m) 
\end{pmatrix}. \IEEEyesnumber
\end{IEEEeqnarray*}
It is easy to see that this corresponds to the element of $\langle a_1+a_3m,a_2+a_4m \rangle$ of the projective line.
This again agrees with the standard action of $\operatorname{PGL}_2(\mathbb{F}_p)$, as

\begin{IEEEeqnarray*}{rCl}
\langle 1,m \rangle 
\begin{pmatrix}
a_1 & a_2 \\
a_3 & a_4
\end{pmatrix}
&=& \langle a_1+a_3m,a_2+a_4m \rangle
\end{IEEEeqnarray*}

We have proven the metacommutation action on $C_p$ is isomorphic to the action of $\operatorname{PGL}_2 (\mathbb{F}_p)$ on $D$ by conjugation, and that the action of $\operatorname{PGL}_2 (\mathbb{F}_p)$ on $D$ by conjugation is isomorphic to the right standard action of $\operatorname{PGL}_2  (\mathbb{F}_p)$ on $\mathbb{P}^1 (\mathbb{F}_p)$. Therefore the metacommutation action on $C_p$ is isomorphic to the right standard action of $\operatorname{PGL}_2 (\mathbb{F}_p)$ on $\mathbb{P}^1 (\mathbb{F}_p)$.
\end{proof}

\section{Main Results}
\noindent We now present new, shorter proofs of some previously known results about the metacommutation map, as well as new information our isomorphism provides.
In the following calculations, $p$ is an odd prime, and $Q \in \mathcal{H}$ has prime norm $q \neq p$.

\begin{Sign of Permutation} [Cohn and Kumar, 2013] 
The sign of the metacommutation map of a Hurwitz prime of norm $q$ on the Hurwitz primes of norm $p$ is $\genfrac(){}{}{q}{p}$.
\end{Sign of Permutation}
\begin{proof}
Recall three facts:
\begin{enumerate}
\item The determinant of a matrix equals the norm of its associated quaternion.
\item Due to the equivalence of group actions, the sign of the standard action of the associated matrix on the elements of $\mathbb{P}^1(\mathbb{F}_p)$ equals the sign of the metacommutation map of the quaternion. 
\item The sign of an element $A$ of $\operatorname{GL}_2(\mathbb{F}_p)$ is the quadratic character of its determinant modulo $p$; that is, the Legendre symbol \[ \left(\frac{\operatorname{det}(A)}{p} \right). \]
\end{enumerate}
Sign does not vary with multiplication by a scalar matrix, because the determinant of a scalar $2$ by $2$ matrix is a square, and the determinant is multiplicative; thus, the sign of a matrix in $\operatorname{PGL}_2(\mathbb{F}_p)$ is well-defined.
It is well known that if the sign of the permutation induced by an element of $\operatorname{PGL}_2(\mathbb{F}_p)$ on $\mathbb{P}^1(\mathbb{F}_p)$ is $1$, then the sign of the matrix of the element is also $1$, and the element is therefore in $\operatorname{PSL}_2(\mathbb{F}_p)$ by definition. This implies its determinant is a square (because we can think of $\operatorname{PSL}_2(\mathbb{F}_p)$ as the subgroup of   $\operatorname{PGL}_2(\mathbb{F}_p)$ containing all matrices of square determinant). Similarly, if the sign of the metacommutation map is $-1$, then the determinant of the associated matrix is not a square. But the determinant of the representation of $Q$ is equal to the norm of $Q$, so the sign of the permutation equals $\genfrac(){}{}{q}{p}$.
\end{proof}
\begin{Fixed Points}[Cohn and Kumar, 2013]The number of fixed points of the metacommutation map on $p$ is 
\[ 1+ \left(\frac{\operatorname{tr}(Q)^2-4q}{p}\right), \] 
unless $\overline{Q} \in \mathbb{F}_p$, in which case every point is a fixed point.
\end{Fixed Points}
\begin{proof}
Let 
\[ M_Q=
 \begin{pmatrix}
a_1 & a_2 \\
a_3 & a_4 \end{pmatrix} \]
be the element of $\operatorname{PGL}_2 (\mathbb{F}_p)$ associated to $\overline{Q}$. The characteristic polynomial of $M_Q$ is \[ (a_1-x)(a_4-x)-a_2a_3=x^2-\operatorname{tr}(Q)x+q. \]
The number of distinct roots in $\mathbb{F}_p$ of this polynomial is 
  \[ 1+\left(\frac{\operatorname{tr}(Q)^2-4q}{p}\right). \]
This number of eigenvalues is the same as the number of fixed points of the projectivized map, with the exception of when there is an eigenvalue with geometric multiplicity of $2$, in which case all points will be fixed points. This occurs when $M_Q$ is diagonal and so $\overline{Q}$ is in $\mathbb{F}_p$.
\end{proof}

We can also study the cycle structure of the metacommutation maps using through $\operatorname{PGL}_2(\mathbb{F}_p)$, thanks to Theorem 4.1.
\begin{Cycles} All of the cycles which are not fixed points in a metacommutation map have the same length.
\end{Cycles}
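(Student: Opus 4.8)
The plan is to push the question entirely into $\operatorname{PGL}_2(\mathbb{F}_p)$ by Theorem 4.1. Since $q \neq p$ is prime, $\overline Q$ has nonzero norm, hence is invertible in $\overline{\mathcal H}$, and $\varphi(\overline Q)$ determines a well-defined element $g = M_Q \in \operatorname{PGL}_2(\mathbb{F}_p)$; by Theorem 4.1 the metacommutation permutation of $C_p$ is conjugate to the permutation of $\mathbb{P}^1(\mathbb{F}_p)$ effected by the right standard action of $g$. If $\overline Q \in \mathbb{F}_p$ then $g = 1$ and the statement is vacuous, so assume $g \neq 1$; write $n$ for its order (which, $\operatorname{PGL}_2(\mathbb{F}_p)$ acting faithfully on $\mathbb{P}^1(\mathbb{F}_p)$, is also its order as a permutation). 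I would prove the sharp claim that \emph{every non-fixed point lies in a cycle of length exactly $n$}; since $n$ does not depend on the point, this is the theorem. The cycle of a point $x$ is its $\langle g\rangle$-orbit, of size $n/\lvert \operatorname{Stab}_{\langle g\rangle}(x)\rvert$, so it suffices to show that non-fixed points have trivial stabilizer in the cyclic group $\langle g\rangle$; equivalently, that no $g^m$ with $0 < m < n$ fixes a point not already fixed by $g$. This is exactly the statement that $\operatorname{Fix}(g^m) = \operatorname{Fix}(g)$ in $\mathbb{P}^1(\mathbb{F}_p)$ whenever $g^m \neq 1$, which I take as the key lemma; granting it, a non-fixed $x$ avoids $\operatorname{Fix}(g) = \operatorname{Fix}(g^m)$ for all $0 < m < n$, so $\operatorname{Stab}_{\langle g\rangle}(x) = \{1\}$ and $x$ sits in an $n$-cycle.

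To prove the lemma I would lift $g$ to $A \in \operatorname{GL}_2(\mathbb{F}_p)$, note that a nonidentity class of $\operatorname{PGL}_2$ fixes a point of $\mathbb{P}^1$ precisely when the corresponding line is an eigenline of its lift, and run through the three conjugacy types. If $A$ is diagonalizable over $\mathbb{F}_p$ with distinct eigenvalues $\alpha\neq\beta$, then $A^m$ has eigenvalues $\alpha^m,\beta^m$, still distinct unless $A^m$ is scalar (i.e.\ $g^m = 1$), so $A^m$ has the same two rational eigenlines as $A$. If $A$ is parabolic, then $n = p$ is prime, so $\langle g\rangle$ has no proper nontrivial subgroup and non-fixed points have trivial stabilizer at once; alternatively $A$ is conjugate to $\lambda\begin{psmallmatrix}1&1\\0&1\end{psmallmatrix}$ and $A^m$ to $\lambda^m\begin{psmallmatrix}1&m\\0&1\end{psmallmatrix}$, which has the same single eigenline whenever $p\nmid m$. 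If the characteristic polynomial of $A$ is irreducible over $\mathbb{F}_p$, then $A$ has no rational eigenline, so $\operatorname{Fix}(g) = \varnothing$; its eigenvalues $\mu,\mu^p$ lie in $\mathbb{F}_{p^2}\setminus\mathbb{F}_p$, and if $A^m$ had a rational eigenvalue then $\mu^m = \mu^{mp} \in \mathbb{F}_p$, forcing $A^m$ scalar and $g^m = 1$ — excluded. The one general fact used throughout is that a non-scalar $2\times 2$ matrix has at most two eigenlines.

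I expect the irreducible (``elliptic'') case to be the main obstacle: it is the only one where the splitting field $\mathbb{F}_{p^2}$ must be invoked, and one has to see that a \emph{non-central} power of an anisotropic element cannot acquire a rational fixed point — the same phenomenon that makes the fixed-point count vanish in Theorem 5.2. Everything else is bookkeeping with the conjugacy classes of $\operatorname{PGL}_2(\mathbb{F}_p)$. As a bonus, the argument pins down the common non-fixed cycle length: it is the order $n$ of $g$ in $\operatorname{PGL}_2(\mathbb{F}_p)$, equivalently the multiplicative order of $\alpha/\beta$ in $\mathbb{F}_p^\times$, of $\mu/\mu^p$ in $\mathbb{F}_{p^2}^\times$, or $p$, in the three respective cases.
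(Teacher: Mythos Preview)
Your proof is correct and takes a genuinely different route from the paper's. The paper argues by contradiction: if there were two non-fixed cycle lengths $m>n>1$, then $M_Q^{\,n}$ would be a non-identity element with at least $n$ fixed points, so Theorem~5.2 forces $n=2$; the residual case is then eliminated by an explicit coordinate computation showing that a genuine $2$-cycle forces $a_1+a_4=0$, whence $M_Q^{\,2}=1$. You instead prove the sharper lemma $\operatorname{Fix}(g^m)=\operatorname{Fix}(g)$ for every non-identity power, by running through the split/parabolic/elliptic trichotomy of conjugacy classes in $\operatorname{PGL}_2(\mathbb{F}_p)$ and tracking eigenvalues in $\mathbb{F}_{p^2}$. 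The paper's approach is more elementary --- it never leaves $\mathbb{F}_p$ and needs only Theorem~5.2 plus one matrix identity --- while yours is more structural: it explains \emph{why} non-scalar powers acquire no new fixed points (the eigenlines of $A$ and of $A^m$ coincide), it avoids the ad hoc $n=2$ calculation, and it makes explicit that the common non-fixed cycle length equals the order of $M_Q$ in $\operatorname{PGL}_2(\mathbb{F}_p)$, a fact the paper leaves implicit.
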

\begin{proof}
We proceed by contradiction. Suppose a permutation $M_Q \in \operatorname{PGL}_2(\mathbb{F}_p)$ of the points of $\mathbb{P}^1 (\mathbb{F}_p)$ had cycles with lengths $m$ and $n$ where $m>n>1$. Then $(M_Q)^{n}$ would have at least $n$ fixed points, but would not be the identity permutation. Thus $n=2$, because (by Theorem 5.2) permutations with more than two fixed points fix all $p+1$ points of $\mathbb{P}^1 (\mathbb{F}_p)$. We now show that if such an $M_Q$ contains a cycle of length 2, then $M_Q$ itself must have order 2, and all cycles of $M_Q$ have length $1$ or $2$.

Suppose for some transformation
\[ M_Q =\begin{pmatrix}
a_1 & a_2 \\
a_3 & a_4 \end{pmatrix} \in \operatorname{PGL}_2 (\mathbb{F}_p) \] that
$\langle
x, y  \rangle \in \mathbb{P}^1 (\mathbb{F}_p)$ has order $2$, so that
\[ \langle
x, y \rangle 
\begin{pmatrix}
a_1 & a_2 \\
a_3 & a_4 \end{pmatrix}^2
=
 \langle
x, y \rangle. \]
Then,
\begin{IEEEeqnarray*}{rCl}
\langle x, y \rangle
\begin{pmatrix}
a_1 & a_2 \\
a_3 & a_4 
\end{pmatrix}^2
&=& \langle a_1x+a_3y, a_2x+a_4y  \rangle
 \begin{pmatrix}
a_1 & a_2 \\
a_3 & a_4 \end{pmatrix} \\ 
&=&
\big \langle (a_1^2+a_2a_3)x+(a_1a_3+a_3a_4)y, \IEEEyesnumber \\
&& \> (a_2a_1+a_2a_4)x+(a_4^2+a_2a_3)y \big \rangle.
\end{IEEEeqnarray*}
Hence, for some non-zero $\lambda \in \mathbb{F}_p$,
\begin{gather*} \lambda x=(a_1^2+a_2a_3)x+a_3(a_1+a_4)y, \\ \lambda y = a_2(a_1+a_4)x+(a_4^2+a_2a_3)y. \end{gather*} 

Suppose $a_1+a_4=0$. Then, $ \lambda x=(a_1^2+a_2a_3)x$ and 
\[ \lambda y=(a_4^2+a_2a_3)y=(a_1^2+a_2a_3)y. \]
Note that $x$ and $y$ cannot both be zero, so $a_1^2+a_2a_3=\lambda$. Hence, all points of $\mathbb{P}^1 (\mathbb{F}_p)$ are fixed points under $M_Q^2$ and all cycles will have length $1$ or $2$. Otherwise, 
\begin{equation}
(a_1^2+a_2a_3)xy+a_3(a_1+a_4)y^2=(a_4^2+a_2a_3)xy+a_2(a_1+a_4)x^2 . 
\end{equation}
Dividing (5.2) by $a_1+a_4$ yields
\[ (a_1-a_4)(xy)=a_2x^2-a_3y^2. \]
This implies
\[ x(a_2x+a_4y)=y(a_1x+a_3y), \]
and thus
\begin{gather}
\langle x , y \rangle \sim \langle a_1x+a_3y, a_2x+a_4y \rangle, 
\end{gather}
so
\begin{gather*}
\langle x, y \rangle 
\begin{pmatrix}
a_1 & a_2 \\
a_3 & a_4 
\end{pmatrix} 
= \langle
x, y \rangle, 
\end{gather*}
which means $\langle x, y \rangle$ has order 1, a contradiction. Hence, all cycles have orders 1 and 2. This proves all the cycles of an element of $\operatorname{PGL}_2(\mathbb{F}_p)$ have the same length. 
\end{proof}

Theorem 5.3 immediately implies the cycle length divides $p+1$, $p$, or $p-1$ depending on whether the permutation has respectively $0$, $1$, or $2$ fixed points. In fact, the number of permutations (and thus the number of distinct metacommutation maps) with order $k>1$ is well-known to equal
\[ \begin{cases}
\varphi(k) p (p - 1) / 2 & \text{if } k \mid (p + 1), \\
\varphi(k) p (p + 1) / 2 & \text{if } k \mid (p - 1), \\
p^2 - 1 & \text{if } k = p.
\end{cases} \]
Notably, the elements of the projective linear group have a nice presentation:
\[ \operatorname{PGL}(2, p) = \langle a, b, \mid a^2b^p= (ab^2)^4=(abab^2)^3b^p=1  \rangle, \]
which \cite{robwil} gives.

\section{A Generalization}
\noindent The results in Section 5 are stated in terms of the ring of Hurwitz quaternions. However, the only specific properties needed of an order $\mathcal{O}$ over the quaternions for these results to extend are a division algorithm for primes of norm $p$ and an isomorphism of $\mathcal{O}/p\mathcal{O}$ with $M_2(\mathbb{F}_p)$. This latter property is equivalent to $\mathcal{O} \otimes_{\mathbb{Z}} \mathbb{F}_p$ being nontrivial.  In particular, the Lipschitz quaternions
\[ \mathcal{L} = \{ a + bi + cj + dk\ |\ a, b, c, d \in \mathbb{Z}\}, \]
do not generally have a division algorithm, which is why results about metacommutation were first stated in terms of the Hurwitz integers. But in the Lipschitz integers, we can divide $Q$ by $P$ as long the norm of $P$ is an odd rational prime, and so our results on metacommutation still hold.

\section{Acknowledgements}
\noindent The authors thank Henry Cohn for suggesting this area of research and for providing thoughtful comments; likewise, we thank Raffael Singer for his invaluable guidance during the period when this research was conducted. We also thank the Clay Mathematics Institute for funding the CMI/PROMYS research labs, as well as Erick Knight, Glenn Stevens, and the PROMYS program for making this research possible. Lastly we would like to thank Tara Smith and Daniel Allcock for helpful comments, and Edward Sanger for his support. 

\begin{bibdiv}
\begin{biblist}
\bib{cperng}{article}{
  title={Factorization of Hurwitz quaternions},
  author={Coan, B.}, 
  author={Perng, C.T.},
  journal={International Mathematical Forum},
  volume={7},
  number={43},
  pages={2143--2156},
  year={2012}
}
\bib{cohnkum}{article}{
    title={Metacommutation of Hurwitz primes},
    author={Cohn, H.},
    author={Kumar, A.},
    pages={to appear},
    journal={Proceedings of the American Mathematical Society},
    date={2013},
    eprint={arxiv.org/abs/1307.0443}
}
\bib{csmith}{book}{
    title={On Quaternions and Octonions: Their Geometry, Arithmetic, and Symmetry},
    author={Conway, J.H.},
    author={Smith, D.A.}
    date={2003},
    publisher={AK Peters Ltd.},
    address={Natick, MA}
}
\bib{irelandrosen}{book}{
    title={A Classical Introduction to Modern Number Theory},
    author={Ireland, K.},
    author={Rosen, M.},,
    series={Graduate Texts in Mathematics \bf 84},
    publisher={Springer-Verlag},
    page={281},
    address={New York},
    date={1990}
}
\bib{robwil}{article}{
  title={A presentation of $\operatorname{PGL}(2, p)$ with three defining relations},
  author={Robertson, P.D.},
  author={Williams, E.F.},
  journal={Proceedings of the Edinburgh Mathematical Society},
  volume={27},
  pages={145--149},
  year={1984},
  publisher={Cambridge Univ Press}
}
\end{biblist}
\end{bibdiv}
\end{document}